\newcommand{\Aut}[1]{\mathrm{Aut}\left(#1\right)}     %  Aut(G)
\newcommand{\Hol}[1]{\mathrm{Hol}\left(#1\right)}     %  Hol(G)
\newcommand{\Inn}[1]{\mathrm{Inn}\left(#1\right)}     %  Inn(G)
\newcommand{\Out}[1]{\mathrm{Out}\left(#1\right)}     %  Out(G)
\newcommand{\Outdiag}[1]{\mathrm{Outdiag}\left(#1\right)}  %  Outdiag(G)
\newcommand{\abs}[1]{\left\vert#1\right\vert}         %  valore assoluto
\newcommand{\set}[1]{\left\{#1\right\}}               %  insieme
\newcommand{\seq}[1]{\left<#1\right>}                 %  sottogr generato
\newcommand{\lto}{\longrightarrow}
\newcommand{\mto}{\mapsto}
\newcommand{\lmto}{\longmapsto}
\newcommand{\n}{\noindent}
\newcommand{\PSL}{\mathrm{L}}           
\newcommand{\PSU}{\mathrm{U}}
\newcommand{\EE}{\mathrm{E}}
\newcommand{\wt}[1]{\widetilde{#1}}
\newtheorem{thm}{Theorem}%[section]
\newtheorem*{namedthm*}{\namedthmname}
\newtheorem{lem}{Lemma}%[section]
\newtheorem{cor}{Corollary}%[section]
\newtheorem{pro}{Proposition}%[section]
\newtheorem{rem}{Remark}%[section]
\begin{document}
\title[On the holomorph of finite semisimple 
groups]{On the holomorph  of finite semisimple groups} 
\date{\today}
 \author{Russell D.~Blyth}
 \address{Department of Mathematics and Computer Science, Saint Louis University, 220 N. Grand Blvd., St. Louis, MO 63103, USA.}
 \email{russell.blyth@slu.edu}
 \author{Francesco Fumagalli}
 \address{Dipartimento di Matematica e Informatica ``Ulisse Dini'',
         viale Morgagni 67/A, 50134 Firenze, Italy.}
 \email{francesco.fumagalli@unifi.it}

% \thanks{}
\keywords{Holomorph; Multiple holomorph; Regular subgroups; 
Finite perfect groups; Central products; Automorphisms} 
\subjclass{20B35; 20D45; 20D40; 20E32.} 
\begin{abstract}
Given a finite nonabelian semisimple group $G$, we describe 
those groups that have the same holomorph as $G$, that is, 
those regular subgroups $N\simeq G$ of $S(G)$, the group of
permutations on the set $G$, such that 
$N_{S(G)}(N)=N_{S(G)}(\rho(G))$, where $\rho$ is
the right regular representation of $G$. 
\end{abstract}
\maketitle

\section{Introduction}\label{sec:intro}
Let $G$ be any finite group and let $S(G)$ be the symmetric group 
on the set of elements of $G$. We denote by
$\rho: G \to S(G)$ and $\lambda: G \to S(G)$ respectively 
the right and the left regular representations of $G$. 
The normalizer $$\Hol{G}=N_{S(G)}(\rho(G))$$ 
is the \emph{holomorph} 
of $G$, and it is isomorphic to the natural extension of $G$ by its 
automorphism group $\Aut{G}$. It is well-known that 
$\Hol{G}=N_{S(G)}(\lambda(G))$. 

In \cite{Mil} the \emph{multiple holomorph} of $G$ has been defined
as $$\textrm{NHol}(G)=N_{S(G)}(\Hol{G})$$ 
and it is proved that the quotient group 
$$T(G)=\textrm{NHol}(G)/\textrm{Hol}(G)$$ 
acts regularly by conjugation on the set of the regular subgroups $N$ of 
$S(G)$ that are isomorphic to $G$ and have the same holomorph as $G$, 
that is, $T(G)$ acts regularly on the set 
$$\mathcal{H}(G)=\set{H\leq S(G)\vert H\, \textrm{ is regular, }
\, H\simeq G, N_{S(G)}(H)=\Hol{G}}.$$

There has been some attention both in the distant past (see \cite{Mills}) 
and quite recently (see \cite{Kohl}, \cite{CDV} \cite{CDV17} and 
\cite{Car}) on the problem of determining, for $G$ in a given class of 
groups, those groups that have the same holomorph
as $G$ and, in particular, the set $\mathcal{H}(G)$ and the structure of 
the group $T(G)$. 
Recently, in \cite{CDV17} the authors attack this problem when $G$ is a 
perfect group, obtaining complete results for centerless groups (see 
Theorem 7.7 in \cite{CDV17}). 
However they leave open some interesting questions when the center is 
non-trivial. 

The aim of this paper is to completely resolve the case when $G$ is a 
finite semisimple group.

One of the main obstacles for describing the holomorph of a finite semisimple group (see \cite[Remark 7.12]{CDV} and also \cite[ADV - 4B]{AGTA}) 
was to completely classify those finite nonabelian simple groups that admit 
automorphisms acting like inversion on their Schur multiplier. 
In Proposition \ref{pro:aut_quasisimple} we produce 
a complete analysis, whose proof depends on the Classification 
of Finite Simple Groups. It turns out that 
there is a small list $\mathcal{L}$ (see after Proposition 
\ref{pro:aut_quasisimple} for its definition) of related 
quasisimple groups having automorphisms inverting their center. 
Our main result can be therefore stated as follows: \\

%
%\begin{namedthm}{Theorem 0}\label{thm:0}
%Let $G$ be a finite nonabelian semisimple group and let 
%$G=A_1A_2\ldots A_n$ be its unique central decomposition 
%as a product of $\Aut{G}$-indecomposable factors. 
%Assume that the components of $G$ do not belong to $\mathcal{L}$. 
%Then the set $\mathcal{H}(G)$ is completely known and 
%$T(G)$ is an elementary abelian $2$-group of rank $n$.
%\end{namedthm}
%

\noindent
{\bf Theorem. }{\it 
Let $G$ be a finite nonabelian semisimple group and let 
$$G=A_1A_2\ldots A_n$$ be its unique central decomposition 
as a product of $\Aut{G}$-indecomposable factors. 
Assume that the number of factors $A_i$ of $G$ having components 
in $\mathcal{L}$ is exactly $l$, for some $0\leq l\leq n$. 
Then the set $\mathcal{H}(G)$ is completely known and it has cardinality
$\abs{\mathcal{H}(G)}= 2^h,$ 
where $\min\set{n-l+1,n}\leq h\leq n$.\\ 
Also, the group $T(G)$ is an elementary abelian group of order $2^h$.\\
Moreover, if the centers of the factors $A_i$ are all 
amalgamated, then $\abs{\mathcal{H}(G)}= 2^m$ and $T(G)$ is elementary abelian 
of order $2^m$, where $m=\min\set{n,n-l+1}$.}

%(Dire da qualche parte)
%in this context we implicitly identify 
%$G$ with its copy $\rho(G)$ inside $S(G)$
%
%\begin{align*}
%\rho: G & \lto S(G)            &  \lambda: G & \lto S(G)\\ 
%      g & \lmto (x\lmto xg) &           g & \lmto (x\lmto gx) 
%\end{align*}

\section{Semisimple groups}\label{sec:J_semisimple}
To establish the notation, note that we write permutations
as exponents, and denote compositions of maps by
juxtaposition. We compose maps left-to-right.
We consider the right and the left regular ``representations" of $G$, defined by 
\begin{align*}
\rho: G &\lto S(G)    &  \lambda: G &\lto S(G)   \\
    g &\lmto (x \mto xg) & g &\lmto (x \mto gx).
\end{align*}
\begin{rem}
Since composition of maps is left-to-right, 
with our definition the map $\lambda$ is an \emph{antihomomorphism}, 
not a homomorphism, from $G$ to $S(G)$.
We have chosen this definition, over the standard one 
(where $\lambda(g)$ maps $x\in G$ to $g^{-1}x$) for the same reasons 
as in \cite{CDV}.
\end{rem}

The first proposition recalls some basic facts (see 
\cite[Proposition 2.4]{CDV17}). 
The proof is left to the reader.
\begin{pro}\label{pro:basic}
Let $G$ be any group and let $\mathrm{inv}$ be the inversion map on $G$ 
defined by $\mathrm{inv}(g)=g^{-1}$ for every $g\in G$. 
The following hold:
\begin{enumerate}
\item $C_{S(G)}(\rho(G))=\lambda(G)$ and $C_{S(G)}(\lambda(G))=\rho(G)$.
%\item The stabilizer of $1$ in $N_{S(G)}(\rho(G))$ is $\Aut{G}$. 
%{\bf{[maybe delete]}} 
\item $N_{S(G)}(\rho(G))=\Aut{G}\ltimes\rho(G)=\Aut{G}\ltimes
\lambda(G)=N_{S(G)}(\lambda(G))$.
\item $\rho(g)^{\mathrm{inv}}=\lambda(g^{-1})$ and 
$\lambda(g)^{\mathrm{inv}}=\rho(g^{-1})$ for every $g\in G$. 
In particular, $\mathrm{inv}$ conjugates $\rho(G)$ to $\lambda(G)$ 
(and vice versa), and it centralizes $\Aut{G}$; therefore 
$\mathrm{inv}$ normalizes $N_{S(G)}(\rho(G))$, that is, 
$\mathrm{inv}\in \mathrm{NHol}(G)$.
\end{enumerate}
\end{pro}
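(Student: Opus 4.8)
The plan is to verify all three statements by direct computation, the only real subtlety being careful bookkeeping with the left-to-right composition convention, under which $x^{\rho(g)}=xg$ and $x^{\lambda(g)}=gx$, so that $\rho$ is a homomorphism while $\lambda$ is an antihomomorphism. The single computation underlying everything is that left and right translations commute: since $x^{\rho(h)\lambda(g)}=gxh=x^{\lambda(g)\rho(h)}$ for all $x\in G$, we obtain at once $\lambda(G)\le C_{S(G)}(\rho(G))$ and $\rho(G)\le C_{S(G)}(\lambda(G))$.

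For part (1) I would prove the reverse inclusions using the ``evaluate at the identity'' trick. If $\sigma\in C_{S(G)}(\rho(G))$, put $a=e^{\sigma}$, where $e$ is the identity of $G$; then for every $x\in G$, writing $x=e^{\rho(x)}$, one has $x^{\sigma}=e^{\rho(x)\sigma}=e^{\sigma\rho(x)}=a^{\rho(x)}=ax=x^{\lambda(a)}$, whence $\sigma=\lambda(a)\in\lambda(G)$. The symmetric computation gives $C_{S(G)}(\lambda(G))=\rho(G)$.

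For part (2), first note that each $\alpha\in\Aut{G}$, viewed as a permutation of $G$, satisfies $\rho(g)^{\alpha}=\rho(g^{\alpha})$, so $\Aut{G}$ normalizes $\rho(G)$; together with the fact that a nontrivial $\rho(g)$ moves $e$ (giving trivial intersection), this shows $\Aut{G}\ltimes\rho(G)\le N_{S(G)}(\rho(G))$. For the reverse inclusion I would take $\sigma\in N_{S(G)}(\rho(G))$ and reduce to the case $e^{\sigma}=e$ by replacing $\sigma$ with $\sigma\rho(a)^{-1}$, where $a=e^{\sigma}$; this replacement still normalizes $\rho(G)$. An $e$-fixing normalizing $\sigma$ induces by conjugation an automorphism $\phi$ of $\rho(G)\cong G$ with $\sigma^{-1}\rho(x)\sigma=\rho(x^{\phi})$, and evaluating $x^{\sigma}=e^{\rho(x)\sigma}=e^{\sigma\rho(x^{\phi})}=x^{\phi}$ shows $\sigma=\phi\in\Aut{G}$. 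Finally, $N_{S(G)}(\rho(G))=N_{S(G)}(\lambda(G))$ is immediate from part (1): a subgroup normalized by $\sigma$ has its centralizer normalized by $\sigma$, so each normalizer is contained in the other, and they share the same complement $\Aut{G}$.

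Part (3) is a pure computation: using that $\mathrm{inv}$ is an involution and $x^{\mathrm{inv}}=x^{-1}$, one finds $x^{\mathrm{inv}\,\rho(g)\,\mathrm{inv}}=(x^{-1}g)^{-1}=g^{-1}x=x^{\lambda(g^{-1})}$, giving $\rho(g)^{\mathrm{inv}}=\lambda(g^{-1})$, and symmetrically $\lambda(g)^{\mathrm{inv}}=\rho(g^{-1})$. Since $\alpha$ is a homomorphism, $x^{\mathrm{inv}\,\alpha}=(x^{\alpha})^{-1}=x^{\alpha\,\mathrm{inv}}$ shows $\mathrm{inv}$ commutes with every $\alpha\in\Aut{G}$. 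As $\mathrm{inv}$ fixes $\Aut{G}$ pointwise and carries $\rho(G)$ onto $\lambda(G)\le\Hol{G}$, while $\Hol{G}=\Aut{G}\ltimes\lambda(G)$ by part (2), it follows that $\mathrm{inv}$ normalizes $\Hol{G}$, i.e.\ $\mathrm{inv}\in\mathrm{NHol}(G)$. I do not expect any genuine obstacle in this proposition; the only place demanding care is the direction of conjugation and the resulting inverses, which the left-to-right convention makes straightforward to track.
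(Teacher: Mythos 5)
Your proof is correct and complete; the paper itself omits the argument (``the proof is left to the reader'', citing \cite[Proposition 2.4]{CDV17}), and your computations --- commuting of left and right translations, evaluation at the identity for the centralizer and normalizer, and the direct conjugation formulas for $\mathrm{inv}$ --- are exactly the standard verification intended there, carried out consistently with the left-to-right composition convention.
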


Recall that a \emph{quasisimple group} is a perfect group $X$ such that 
$X/Z(X)$ is simple, and that a \emph{semisimple group} is a central 
product of quasisimple groups, that is, a group $X=X_1X_2\ldots X_t$ 
with each $X_i$ quasisimple and such that $[X_i,X_j]=1$ for every 
$i\ne j$. % (see definition on page 16 in \cite{GLS1}). 
The quasisimple normal subgroups 
$X_i$ of $X$ are called the \emph{components} of $X$.
Note in particular that semisimple groups are perfect. 

Every finite semisimple group admits a unique 
decomposition as a central product of characteristic subgroups.
\begin{lem}\label{lem:0}
Let $G$ be a finite semisimple group. Then $G$ is a central product of 
perfect centrally indecomposable $\Aut{G}$-subgroups:
$$G=A_1A_2 \ldots A_n.$$
Moreover, the integer $n$ and the subgroups $A_i$ (for $i=1,2,\ldots, n$)
are uniquely determined 
(up to permutation).
\end{lem}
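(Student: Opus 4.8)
The plan is to build the decomposition directly from the components of $G$, grouped by the action of $\Aut{G}$. Write $G=X_1X_2\cdots X_t$ as a central product of its components $X_i$, the quasisimple subnormal subgroups of $G$. Since being a component is an intrinsic group-theoretic property, the set $\set{X_1,\ldots,X_t}$ is permuted by every automorphism of $G$; let $O_1,\ldots,O_n$ be the orbits of this permutation action of $\Aut{G}$, and set $A_i=\prod_{X_k\in O_i}X_k$ for $1\le i\le n$. I claim that $G=A_1A_2\cdots A_n$ is the asserted decomposition, and that it is forced.

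For existence, each $A_i$ is $\Aut{G}$-invariant (a product over a single orbit), it is perfect (a product of quasisimple groups), distinct $A_i$ and $A_j$ commute (distinct components commute), and together they generate $G$; so $G=A_1\cdots A_n$ is a central decomposition into perfect characteristic subgroups. For central indecomposability I would invoke two structural facts about the semisimple group $A_i$: that $A_i$ equals the product of the components it contains, and the containment-or-centralizing dichotomy, namely that a component either lies in or centralizes any normal subgroup. Suppose then $A_i=BC$ with $B,C$ proper, $\Aut{G}$-invariant and $[B,C]=1$. Each $B,C$ is normal in $A_i$, and no component $X_k$ of $A_i$ can centralize both $B$ and $C$ (it would then centralize $BC=A_i$ and be abelian), so by the dichotomy every component lies in $B$ or in $C$. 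The set of components contained in $B$ is $\Aut{G}$-invariant, hence by transitivity of $\Aut{G}$ on $O_i$ it is empty or all of $O_i$; in either case all components land on one side, which therefore equals $A_i$ since $A_i$ is the product of its components. This contradicts properness, so each $A_i$ is centrally $\Aut{G}$-indecomposable.

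For uniqueness, suppose $G=A_1'\cdots A_m'$ is a second decomposition into perfect centrally indecomposable $\Aut{G}$-subgroups. Each $A_j'$ is normal and perfect, hence equals the product of its components, which are among $X_1,\ldots,X_t$; thus $A_j'$ is determined by an $\Aut{G}$-invariant subset $S_j'$ of $\set{X_1,\ldots,X_t}$. The same argument as in the existence step shows that central indecomposability forces each $S_j'$ to be a single $\Aut{G}$-orbit. The subsets $S_j'$ cover all components because $A_1'\cdots A_m'=G$, and they are pairwise disjoint because $[A_i',A_j']=1$ while no nonabelian component centralizes itself. Hence $\set{S_1',\ldots,S_m'}$ is exactly the set of $\Aut{G}$-orbits $\set{O_1,\ldots,O_n}$, so $m=n$ and the $A_j'$ coincide with the $A_i$ up to reordering.

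The main obstacle is the structural input used repeatedly above: that in a finite semisimple group the components form a characteristic set permuted by $\Aut{G}$, that every normal (a fortiori every $\Aut{G}$-invariant) perfect subgroup is the central product of a uniquely determined subset of the components, and the containment-or-centralizing dichotomy of a component against a normal subgroup. These are the parts that draw on the general theory of components and the layer; once they are in place, the orbit construction together with transitivity and perfectness make both existence and uniqueness routine.
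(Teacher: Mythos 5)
Your proof is correct, but it takes a genuinely different route from the paper's. The paper applies the Remak--Krull--Schmidt theorem to $G/Z(G)$ regarded as an $\Aut{G}$-group, lifts each indecomposable direct factor $M_i/Z(G)$ to the perfect subgroup $A_i=M_i'$, and invokes the fact that perfect groups admit no nontrivial central automorphisms (\cite{Hug}, \cite[Lemma 7.1]{CDV}) to show that the lifted factors actually commute; uniqueness is then inherited from the uniqueness clause of Remak--Krull--Schmidt together with the same lemma. You instead work directly with the layer: your $A_i$ are the products of the $\Aut{G}$-orbits of components, and both indecomposability and uniqueness come from transitivity on an orbit combined with the dichotomy that a component of $G$ either lies in or centralizes any normal subgroup. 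Your version buys an explicit, constructive description of the factors (which is genuinely useful later, e.g.\ in Theorem \ref{thm:A}, where one counts the factors whose components lie in $\mathcal{L}$) and avoids Remak--Krull--Schmidt altogether, at the price of importing the standard component machinery. The one ingredient you use that merits a line of proof rather than a bare citation is that a perfect normal subgroup $N$ of a semisimple group equals the product $P$ of the components it contains: by the dichotomy $N$ centralizes the product $Q$ of the remaining components, so $N\le C_G(Q)=PZ(Q)\le PZ(G)$, and then $N=N'\le (PZ(G))'=P\le N$. With that supplied, both the existence and the uniqueness halves of your argument are complete.
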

\begin{proof}
Consider the Remak-Krull-Schmidt decomposition of 
$\Inn{G}\simeq G/Z(G)$ as an $\Aut{G}$-group, and let this be
\begin{equation}\label{eq:G_central_1}
\frac{G}{Z(G)}=\frac{M_1}{Z(G)}\times \frac{M_2}{Z(G)}\times \ldots
\times \frac{M_n}{Z(G)}.
\end{equation}
Since $G$ is perfect, each $M_i/Z(G)$ is perfect. 
In particular, each $M_i$ is equal to $M_i'Z(G)$, where $M_i'$ is perfect. Now for every 
$j\neq i$ we have that
$$[M_j,M_i']=[M_j,M_i]\leq Z(G),$$
and thus $M_j$ induces by conjugation a central automorphism on $M_i'$. 
Since perfect groups have no nontrivial central automorphisms
(see \cite{Hug}, or \cite[Lemma 7.1]{CDV}), we have $[M_i,M_j]=1$ for every $j\neq i$. In particular, 
equation (\ref{eq:G_central_1}) and the fact that $G$ is perfect imply the following central factorization of $G$:
\begin{equation}\label{eq:G_central}
G=A_1A_2\ldots A_n,
\end{equation}
where $A_i=M_i'$, for each $i=1,2,\ldots, n$. Note that 
the $A_i$ are perfect $\Aut{G}$-subgroups, which are indecomposable as 
$\Aut{G}$-subgroups. 
Finally the uniquiness of the factorization (\ref{eq:G_central_1}) (see 
\cite[3.3.8]{Rob}) and, again, the fact that perfect groups have no nontrivial 
central automorphisms, imply that the central product decomposition 
(\ref{eq:G_central}) is also unique. 
%(Recall also that the Krull–Remak–Schmidt of G in terms of indecomposable
%Aut(G)-subgroups is unique, because of [18, 3.3.8, p. 83] and Lemma 7.1.(2).)
\end{proof}

We make use of the same notation as \cite{CDV}.
In particular, we define the following subsets of subgroups of $S(G)$:
\begin{align*} 
\mathcal{H}(G)&=\set{H\leq S(G)\vert H\, \textrm{ is regular, }\, H\simeq G, N_{S(G)}(H)=\Hol{G}},\\ 
\mathcal{I}(G)&=\set{H\leq S(G)\vert H\, \textrm{ is regular, }\, N_{S(G)}(H)=\Hol{G}}, \\
\mathcal{J}(G)&=\set{H\leq S(G)\vert H\, \textrm{ is regular, }\, N_{S(G)}(H)\geq \Hol{G}}.
\end{align*}
Note that $\mathcal{H}(G)\subseteq\mathcal{I}(G)\subseteq\mathcal{J}(G)$.\\

%Also let $\textrm{NHol}(G)=N_{S(G)}(\Hol{G})$ be the \emph{multiple holomorph} of $G$ and $T(G)$ the quotient group
%$$T(G)=\textrm{NHol}(G)/\textrm{Hol}(G).$$ 
%Note that $T(G)$ acts regularly by conjugation on $\mathcal{H}(G)$. 

From now on we assume that $G$ is a finite semisimple group and we 
write $G$ as a central product of indecomposable $\Aut{G}$-subgroups, 
in a unique way (by Lemma~\ref{lem:0}):
$$G=A_1A_2\ldots A_n.$$
Note that by Proposition \ref{pro:basic} (1), we have that
$[\rho(A_i),\lambda(A_j)]=1$ for every $i\neq j$.\\
Fix $I$ to be the set $\set{1,2,\ldots,n}$.
For each subset $J$ of $I$ we denote the central product 
$\prod_{j\in J}A_j$ by $A_J$. Then for each subset $J$ of $I$ we may
define the subgroup $G_J$ of $S(G)$ to be 
$$G_J=\rho(A_J)\lambda(A_{J^c}),$$
where $J^c=I\setminus J$. Note that $G_I=\rho(G)$ and 
$G_{\varnothing}=\lambda(G)$.

\begin{lem}\label{lem:G_J} Each $G_J$ is a subgroup of $S(G)$ that 
lies in $\Hol{G}$.\end{lem}
\begin{proof}
Since $[\rho(G),\lambda(G)]=1$,  
it follows for every $x_J,y_J\in A_J$, $x_{J^c},y_{J^c}\in A_{J^c}$ that
\begin{align*}
\rho(x_J)\lambda(x_{J^c})\big(\rho(y_J)\lambda(y_{J^c})\big)^{-1} &=
\rho(x_J)\rho(y_J)^{-1}\lambda(x_{J^c})\lambda(y_{J^c})^{-1}\\
& =\rho(x_Jy_J^{-1})\lambda(y_{J^c}^{-1}x_{J^c}),
\end{align*}
which lies in $G_J$.

Moreover, since $\rho(A_J)\leq \rho(G)$ and 
$\lambda(A_{J^c})\leq \lambda(G)$, we have 
$$G_J=\rho(A_J)\lambda(A_{J^c})\leq \seq{\rho(G),\lambda(G)}\leq 
\Hol{G},$$
completing the proof of the Lemma.
\end{proof}

\begin{lem}\label{lem:inv} The inversion map $\mathrm{inv}$ 
conjugates $G_J$ to $G_{J^c}$, for every $J\subseteq I$.
\end{lem}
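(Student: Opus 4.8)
The plan is to verify directly that conjugation by $\mathrm{inv}$ sends each generator of $G_J$ to a generator of $G_{J^c}$, using the explicit action established in Proposition~\ref{pro:basic}(3). Recall that $G_J=\rho(A_J)\lambda(A_{J^c})$, so a typical element is a product $\rho(x_J)\lambda(x_{J^c})$ with $x_J\in A_J$ and $x_{J^c}\in A_{J^c}$. The key input is the formula $\rho(g)^{\mathrm{inv}}=\lambda(g^{-1})$ and $\lambda(g)^{\mathrm{inv}}=\rho(g^{-1})$, which tells us that conjugation by $\mathrm{inv}$ swaps the roles of $\rho$ and $\lambda$ (with an inversion in the argument).

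First I would compute the image of a single factor: since $x_J\in A_J\leq G$, we have $\rho(x_J)^{\mathrm{inv}}=\lambda(x_J^{-1})$, and because $A_J$ is a subgroup, $x_J^{-1}$ ranges over all of $A_J$ as $x_J$ does; hence $\rho(A_J)^{\mathrm{inv}}=\lambda(A_J)$. Symmetrically, $\lambda(A_{J^c})^{\mathrm{inv}}=\rho(A_{J^c})$. Since conjugation by any fixed permutation is an automorphism of $S(G)$, it respects products, so
\begin{align*}
G_J^{\mathrm{inv}}
&=\bigl(\rho(A_J)\lambda(A_{J^c})\bigr)^{\mathrm{inv}}
=\rho(A_J)^{\mathrm{inv}}\,\lambda(A_{J^c})^{\mathrm{inv}}\\
&=\lambda(A_J)\,\rho(A_{J^c}).
\end{align*}
Up to reordering the two commuting factors (they commute by Proposition~\ref{pro:basic}(1), since $A_J$ and $A_{J^c}$ involve disjoint index sets), this last expression is exactly $\rho(A_{J^c})\lambda(A_J)=G_{J^c}$, because the complement of $J^c$ is $J$. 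This completes the identification $G_J^{\mathrm{inv}}=G_{J^c}$.

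I anticipate no serious obstacle here: the statement is essentially a bookkeeping consequence of Proposition~\ref{pro:basic}(3) together with the fact that each $A_J$ is a subgroup (closed under inversion) and that the $\rho$- and $\lambda$-parts commute. The one point deserving a line of care is the commutation step: when I rewrite $\lambda(A_J)\rho(A_{J^c})$ as $\rho(A_{J^c})\lambda(A_J)$ I am using that $[\rho(A_{J^c}),\lambda(A_J)]=1$, which holds because $J$ and $J^c$ are disjoint (as noted in the paragraph preceding Lemma~\ref{lem:G_J}). I would state this explicitly to make the reordering rigorous rather than leave it as an implicit ``up to reordering.''
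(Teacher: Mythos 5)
Your proof is correct and matches the paper's approach: the paper simply declares the lemma an immediate consequence of Proposition~\ref{pro:basic}(3), and you have written out exactly the routine verification that is being left implicit (closure of $A_J$ under inversion, conjugation respecting products, and $(J^c)^c=J$). The only cosmetic remark is that the commutation $\lambda(A_J)\rho(A_{J^c})=\rho(A_{J^c})\lambda(A_J)$ needs no disjointness of index sets at all, since $[\rho(G),\lambda(G)]=1$ holds globally by Proposition~\ref{pro:basic}(1).
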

\begin{proof}
The lemma is an immediate consequence of Proposition \ref{pro:basic} (3).
\end{proof}

\begin{lem}\label{lem:J} Assume that $G$ is a finite 
semisimple group. Then, with the above notation,
$\mathcal{J}(G)=\set{G_J\vert J\subseteq I}.$
\end{lem}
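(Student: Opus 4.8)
The plan is to prove the two inclusions separately, with the content concentrated in one direction. The inclusion $\set{G_J\mid J\subseteq I}\subseteq\mathcal{J}(G)$ is the routine one: by Lemma~\ref{lem:G_J} each $G_J$ already lies in $\Hol{G}$, so it remains to check that $G_J$ is regular and that $\Hol{G}\leq N_{S(G)}(G_J)$. For regularity I would show the action is free, which together with the order count $\abs{G_J}=\abs{A_J}\abs{A_{J^c}}/\abs{A_J\cap A_{J^c}}=\abs{G}$ gives regularity: if $\rho(a)\lambda(b)$ with $a\in A_J$, $b\in A_{J^c}$ fixes a point $x\in G$, then $bxa=x$, so $b=xa^{-1}x^{-1}\in A_J\cap A_{J^c}\leq Z(G)$ because the $A_i$ are characteristic and their pairwise intersections are central; hence $b=a^{-1}\in Z(G)$ and $\rho(a)\lambda(b)=\rho(a)\lambda(a)^{-1}$ is the identity. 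For the normalizing statement, writing $\Hol{G}=\Aut{G}\ltimes\rho(G)$ (Proposition~\ref{pro:basic}(2)), the subgroup $\rho(G)$ normalizes $\rho(A_J)$ and centralizes $\lambda(A_{J^c})$ (Proposition~\ref{pro:basic}(1)), while each $\alpha\in\Aut{G}$ preserves both $\rho(A_J)$ and $\lambda(A_{J^c})$ since the $A_i$ are $\Aut{G}$-invariant; thus $\Hol{G}$ normalizes $G_J$.

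For the reverse inclusion, fix $H\in\mathcal{J}(G)$, so that $H$ is regular and $\Hol{G}\leq N_{S(G)}(H)$; in particular $\rho(G)$ and $\lambda(G)$ both normalize $H$. Let $K=\rho(G)\lambda(G)=\seq{\rho(G),\lambda(G)}$. The crucial step is to show that $H\leq K$. I would first identify $K$ with the generalized Fitting subgroup $F^{*}(\Hol{G})$: from Proposition~\ref{pro:basic}(1) one gets $C_{\Hol{G}}(\rho(G))=\lambda(G)$ and $C_{\Hol{G}}(\lambda(G))=\rho(G)$, whence $C_{\Hol{G}}(K)=\rho(G)\cap\lambda(G)=Z(K)$; since $K$ is a normal semisimple subgroup of $\Hol{G}$ whose centralizer is its own center, $F^{*}(\Hol{G})=K$, and the components of $\Hol{G}$ are exactly the $\rho(Q_i)$ and $\lambda(Q_i)$, where the $Q_i$ are the components of $G$. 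One then argues that the $\Hol{G}$-invariant regular subgroup $H$ is forced into $F^{*}(\Hol{G})$: its layer is generated by quasisimple subnormal subgroups permuted by $\Hol{G}$, and the absence of nontrivial central automorphisms of perfect groups (as already used in Lemma~\ref{lem:0}) together with regularity excludes any component or solvable part of $H$ lying outside $K$.

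Granting $H\leq K$, the conclusion is structural bookkeeping. Since $K\leq\Hol{G}\leq N_{S(G)}(H)$ we have $H\nor K$, and a normal subgroup of the semisimple group $K$ is a central product of a subset of the components of $K$ with a central subgroup; thus $H=L\cdot(H\cap Z(K))$, where $L$ is a product of some of the $\rho(Q_i),\lambda(Q_i)$. Regularity forces $H$ to surject onto each simple factor of $\Inn{G}=G/Z(G)$, so $L$ must contain at least one of $\rho(Q_i),\lambda(Q_i)$ for every $i$, while $\abs{H}=\abs{G}$ forbids it from containing both; hence $L$ contains exactly one per pair. The resulting central amalgamation pattern coincides with that of the $Q_i$ inside $G$, so already $\abs{L}=\abs{G}=\abs{H}$, giving $H=L$. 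Finally, $\Aut{G}$ permutes the components within each $\Aut{G}$-indecomposable factor $A_j$ transitively and carries $\rho$-components to $\rho$-components (only $\mathrm{inv}\notin\Hol{G}$ swaps the two sides, by Lemma~\ref{lem:inv}), so the choice of side is constant on each $A_j$; hence $L=\rho(A_J)\lambda(A_{J^c})=G_J$ for $J=\set{j\mid\rho(A_j)\leq H}$. I expect the main obstacle to be precisely the step $H\leq K=F^{*}(\Hol{G})$: this is where the hypothesis that all of $\Hol{G}$ (not merely $\rho(G)$) normalizes $H$, the semisimplicity of $G$, and the rigidity of perfect groups must be combined, and it is the only part that is not a formal consequence of the central-product arithmetic.
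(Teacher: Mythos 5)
Your proof of the inclusion $\set{G_J\mid J\subseteq I}\subseteq\mathcal{J}(G)$ (regularity plus invariance under $\rho(G)$, $\lambda(G)$ and $\Aut{G}$) is correct and is essentially the paper's argument. The divergence is in the reverse inclusion. The paper does not attempt a direct structural proof: it quotes \cite[Theorem 7.8]{CDV}, which gives $\abs{\mathcal{J}(G)}=2^n$ for a finite perfect group with $n$ indecomposable central factors, and then only has to check that the $2^n$ subgroups $G_J$ are pairwise distinct (if $G_J=G_K$ then some $G_J$ contains both $\rho(A_i)$ and $\lambda(A_i)$, so its point stabiliser contains all $\rho(x)\lambda(x^{-1})$ with $x\in A_i$, contradicting regularity since $A_i$ is not central). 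Your route replaces this citation by the claim that every $H\in\mathcal{J}(G)$ lies in $K=\rho(G)\lambda(G)=F^{*}(\Hol{G})$; you flag this yourself as ``the main obstacle,'' and as written it is a genuine gap rather than a detail to be filled in.

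Concretely, membership in $\mathcal{J}(G)$ only requires that $H$ be regular and normalized by $\Hol{G}$: it is not given that $H\simeq G$, nor that $H$ is perfect, nor even that $H\leq\Hol{G}$ (the condition ``$\rho(G)$ normalizes $H$'' is not symmetric with ``$H$ normalizes $\rho(G)$''; relating the two is itself nontrivial). So the sentence ``its layer is generated by quasisimple subnormal subgroups \dots\ and regularity excludes any component or solvable part of $H$ lying outside $K$'' presupposes both that $H$ sits inside $\Hol{G}$ and that $H$ coincides with its own generalized Fitting subgroup, neither of which has been established, and it offers no mechanism for excluding a nontrivial soluble radical of $H$. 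This is exactly the hard content of \cite[Theorem 7.8]{CDV}, proved there via the maps $\gamma\colon G\to\Aut{G}$ attached to regular subgroups and the absence of nontrivial central automorphisms of perfect groups. Your remaining bookkeeping (normal subgroups of the central product $K$ are products of components times a central subgroup; transitivity forces at least one of $\rho(Q_i),\lambda(Q_i)$ per pair and the order count forbids both; $\Aut{G}$-transitivity on the components of each $A_j$, preserving the $\rho$/$\lambda$ labels, makes the choice constant on each $A_j$) is sound and would recover the $G_J$ --- but only once $H\leq K$ is in hand. Either supply a complete proof of that step or, as the paper does, invoke the count from \cite{CDV} and verify distinctness of the $G_J$.
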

\begin{proof}
We first show that $G_J$ acts regularly on the set $G$. For an 
arbitrary $g\in G$ write $g=x_Jx_{J^c}$, with $x_J\in A_J$ and 
$x_{J^c}\in A_{J^c}$. 
Then the element $\rho(x_J)\lambda(x_{J^c}^{-1})$ of $G_J$  
sends $1$ to $g$:
$$1^{\rho(x_J)\lambda(x_{J^c})}=x_{J^c}x_J=x_Jx_{J^c}=g,$$
since $[A_J,A_{J^c}]=1$. Moreover the stabilizer of $1$ in $G_J$ 
consists of the elements $\sigma=\rho(x_J)\lambda(x_{J^c})$ such that 
$x_J=x_{J^c}^{-1}\in A_J\cap A_{J^c}\leq Z(G)$, and therefore 
$\sigma$ is the identity.

To show that each $G_J$ is normal in $\Hol{G}$, since 
$[\rho(G),\lambda(G)]=1$, 
it is enough to show that $\Aut{G}$ normalizes every $G_J$. 
Fix $J\subseteq I$, let $\alpha\in\Aut{G}$, and let  
$\rho(x_J)\lambda(x_{J^c})$ be an arbitrary element of $G_J$,
where $x_J\in A_J$ and 
$x_{J^c}\in A_{J^c}$. Then we have that, for every $g\in G$,

$$g^{\alpha^{-1}\rho(x_J)\lambda(x_{J^c})\alpha}=
(x_{J^c}g^{\alpha^{-1}}x_J)^{\alpha}=x_{J^c}^{\alpha}gx_J^{\alpha}=
g^{\rho(x_J^{\alpha})\lambda(x_{J^c}^{\alpha})}.$$
Therefore
$$\big(\rho(x_J)\lambda(x_{J^c})\big)^{\alpha}=
\rho(x_J^{\alpha})\lambda(x_{J^c}^{\alpha}),$$
which lies in $G_J$, since $A_J$ and $A_{J^c}$ are 
characteristic subgroups of $G$. 

Finally by \cite[Theorem 7.8]{CDV}, $\abs{\mathcal{J}(G)}=2^n$, 
and therefore to complete the proof it remains to show that $G_J\neq G_K$ 
whenever $J\neq K$. 
If $G_J=G_K$ then there exists an $i\in I$ for which $G_J$ contains 
both $\rho(A_i)$ and $\lambda(A_i)$. But then the stabilizer of 
$1$ in $G_J$ would contain 
$\set{\rho(x)\lambda(x^{-1})\vert x\in A_i}$, that is, all 
the conjugates of elements of $A_i$. Since $A_i$ is not 
central in $G$, this contradicts the fact that $G_J$ is regular.
\end{proof}

\section{Automorphisms of finite quasisimple groups}\label{sec:quasisimple}
In this section we classify all finite quasisimple groups that admit 
an automorphism acting like inversion on the center. 
This classification, which is used in the proof of our main result
(Theorem A), is proved in Proposition \ref{pro:aut_quasisimple} 
using the Classification of Finite Simple Groups. This result completely 
answers a question posed in \cite[Remark 7.12]{CDV} (see also \cite[ADV - 
4B]{AGTA}), namely whether there are finite quasisimple groups $L$ such 
that $Z(L)$ is not elementary abelian, and such that $\Aut{L}$ does not 
induce inversion on $Z(L)$, or acts trivially on it.

Before stating and proving Proposition \ref{pro:aut_quasisimple}, we 
introduce some notation and terminology related to automorphisms of 
finite nonabelian simple groups of Lie type. 
We refer the interested reader to the first two chapters of \cite{GLS3}.
 
Let $S$ be any finite group of Lie type. Then, 
by \cite[Theorem 30]{Stein1}, any automorphism of $S$ is a product $idf\!g$,
where $i$ is an inner automorphism of $S$ (an element of $\Inn{S}$), 
$d$ is a diagonal automorphism of $S$, $f$ is a field automorphism of $S$ and $g$ is a graph automorphism of $S$. 
The following notation is quite standard. $\textrm{Inndiag}(S)$,  
$\Phi_S$ and $\Gamma_S$ denote, respectively, the group of inner-diagonal 
automorphisms of $S$, of field automorphisms of $S$, and of graph 
automorphisms of $S$. Further, $\Outdiag{S}$ is defined to be
$\textrm{Inndiag}(S)/\Inn{S}$ (see \cite[Definition 2.5.10]{GLS3}). 

\begin{pro}\label{pro:aut_quasisimple}
Let $K$ be a finite quasisimple group. Then there exists an 
automorphism of $K$ that inverts $Z(K)$ if and
only if $K$ is not isomorphic to one of the following groups:
\begin{enumerate}
%\item $\wt{\PSL_3(4)}$, the universal covering of $\PSL_3(4)$;
%\item a covering of $\PSL_3(4)$ having center isomorphic to 
%$Z_2\times Z_2\times Z_3$ or $Z_2\times Z_4\times Z_3$;
\item a covering of $\PSL_3(4)$, with center 
containing $Z_2\times Z_2\times Z_3$;
%\item $\wt{\PSU_4(3)}$, the universal covering of $\PSU_4(3)$;
%\item a covering of $\PSU_4(3)$ having center isomorphic to 
%$Z_{3}\times Z_4$.
\item a covering of $\PSU_4(3)$, with center 
containing $Z_{3}\times Z_4$;
\item $\wt{\PSU_6(2)}$, the universal covering of $\PSU_6(2)$;
\item $\wt{\null^2\EE_6(2)}$, the universal covering of $\null^2\EE_6(2)$.
\end{enumerate}
\end{pro}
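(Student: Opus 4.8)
The plan is to reduce the question to the action of the outer automorphism group $\Out{S}$ on the Schur multiplier of the simple group $S=K/Z(K)$. Let $\wt{S}$ be the universal covering group of $S$, so that $Z(\wt{S})$ is the full Schur multiplier $M(S)$ and $K=\wt{S}/N$ for some $N\leq Z(\wt{S})$, with $Z(K)=Z(\wt{S})/N$. Since $S$ (hence $K$) is perfect, $\wt{S}$ is also the universal cover of $K$, so every automorphism of $K$ lifts uniquely to one of $\wt{S}$ preserving $N$; as $\Inn{\wt{S}}$ centralizes $Z(\wt{S})$, the resulting action on $Z(\wt{S})$ factors through $\Out{S}$. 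Thus $K$ admits an automorphism inverting $Z(K)$ if and only if some element of $\Out{S}$, lifted to $\wt{S}$, fixes $N$ and acts as inversion on $Z(\wt{S})/N$. Two simplifications drive the argument: inversion is the identity on any elementary abelian $2$-section, so only the parts of $M(S)$ of exponent greater than $2$ can obstruct; and an automorphism inverting all of $Z(\wt{S})$ preserves every subgroup $N$ and descends to every cover, so for groups whose multiplier is purely ``canonical'' it suffices to invert $Z(\wt{S})$ itself.

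I would then run through the Classification family by family. For $S$ alternating, $M(S)$ has exponent $2$ except for $S=\Alt{6}$ and $S=\Alt{7}$, where $M(S)\cong Z_6$; in each case an outer automorphism inverts the $Z_3$-part, so no alternating group fails. For $S$ sporadic I would list, citing \cite{GLS3}, those with non-exponent-$2$ multiplier (such as $M_{22}$, $\mathrm{Suz}$, $\mathrm{McL}$, $\mathrm{Fi}_{22}$, and so on) and check that $\Out{S}$ contains an element inverting the odd part (and, for $M_{22}$, the order-$4$ part); none fail. For $S$ of Lie type whose multiplier is purely canonical, $Z(\wt{S})$ is either of exponent $\leq 2$ or cyclic, and I would exhibit the inverting automorphism explicitly: the transpose-inverse graph automorphism inverts the cyclic center in the untwisted types $A_n$, $D_n$ ($n$ odd) and $E_6$, while the defining field automorphism $x\mapsto x^{(q)}$ inverts it in the twisted types $\null^2A_n$, $\null^2D_n$ and $\null^2E_6$ (on a central scalar $\zeta$ with $\zeta^{q+1}=1$ one has $\zeta^q=\zeta^{-1}$); all remaining types have center of exponent $\leq 2$. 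Hence the only candidates for failure come from groups of Lie type carrying an \emph{exceptional} Schur multiplier.

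The list of groups of Lie type with exceptional multiplier is finite and explicit (\cite{GLS3}), and I would treat each using the known $\Out{S}$-action on $M(S)$. The mechanism producing the four exceptions is an incompatibility between the canonical and exceptional parts. For $\PSU_6(2)=\null^2A_5(2)$ and $\null^2\EE_6(2)$ the multiplier is $Z_2\times Z_2\times Z_3$: inverting $Z(K)$ forces inversion on the canonical $Z_3$, but every element of $\Out{S}$ inverting that $Z_3$ acts on the exceptional $Z_2\times Z_2$ as a nontrivial involution, whereas inversion on an exponent-$2$ group is the identity; so no automorphism inverts the whole center, and the universal covers fail. For $\PSL_3(4)$ (exceptional part $Z_4\times Z_4$, canonical part $Z_3$) and $\PSU_4(3)=\null^2A_3(3)$ (canonical $Z_4$ together with an exceptional $Z_3\times Z_3$) the same incompatibility singles out precisely the covers whose center contains $Z_2\times Z_2\times Z_3$, respectively $Z_3\times Z_4$, while smaller covers, in which the troublesome section has been collapsed, still admit an inverting automorphism.

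The main obstacle is this last step, and in particular the negative direction for the four exceptional groups: one must show not merely that the obvious field or graph automorphism fails, but that \emph{no} automorphism works. Because the action on $M(S)$ factors through the finite group $\Out{S}$, this is a finite verification, but it requires the precise $\Out{S}$-module structure of the full (canonical-plus-exceptional) multiplier and careful bookkeeping — available through the ATLAS and \cite{GLS3} — of how the diagonal, field and graph automorphisms permute the exceptional generators relative to the canonical ones. Establishing this interaction cleanly for $\PSL_3(4)$ and $\PSU_4(3)$, whose exceptional parts are themselves of exponent greater than $2$, is where the bulk of the casework lies.
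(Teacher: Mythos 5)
Your overall strategy is the same as the paper's: reduce to the action of $\Out{S}$ on the Schur multiplier $M(S)=M_c(S)\times M_e(S)$ of $S=K/Z(K)$, invert the canonical part via a graph automorphism (untwisted $A_n$, $D_{2m+1}$, $\EE_6$) or a field automorphism (their twisted forms), and confine the remaining casework to the finitely many groups with exceptional multipliers. You identify the correct four exceptional families and the correct obstruction mechanism (no single element of $\Out{S}$ acts compatibly on $M_c(S)$ and $M_e(S)$), and your treatment of $\PSU_6(2)$ and $\null^2\EE_6(2)$ is essentially complete and matches the paper's: the faithful $S_3$-action on $M_e(S)\simeq Z_2\times Z_2$ forces the only candidate to be the trivial outer automorphism, which does not invert $M_c(S)\simeq Z_3$.

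The genuine gap is the step you flag and then do not carry out: for $\PSL_3(4)$ and $\PSU_4(3)$ you assert, but do not prove, that the failing coverings are exactly those with center containing $Z_2\times Z_2\times Z_3$, respectively $Z_3\times Z_4$. This is the heart of the proposition (it is precisely the question left open in \cite[Remark 7.12]{CDV}), and it is more than bookkeeping. For $\PSL_3(4)$ the paper argues as follows: the elements of $\Out{S}$ that invert $M_c(S)\simeq Z_3$ are the six non-central involutions $t\in\Outdiag{S}\phi\cup\Outdiag{S}i$; each such $t$ inverts $\Outdiag{S}\simeq Z_3$, which acts faithfully, hence fixed-point-freely, on $M_e(S)/\Phi(M_e(S))$, and a centralizer computation then gives $C_{M_e(S)}(t)\simeq Z_4$, so that $t$ inverts exactly one cyclic subgroup of order $4$ of $M_e(S)\simeq Z_4\times Z_4$. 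From this one reads off that the quotients $Z(K)$ with noncyclic $2$-part and order divisible by $3$ cannot be inverted, while those with cyclic $2$-part can. The analogous explicit computation for $\PSU_4(3)$ (the unique element of $\Out{S}\simeq D_8$ inverting $M_e(S)\simeq Z_3\times Z_3$ is the central involution, which lies in $\Outdiag{S}$ and hence centralizes $M_c(S)\simeq Z_4$) is likewise needed to isolate the covers of order divisible by $12$. Without these computations your argument establishes only that the universal covers fail, not the precise list of covers in the statement. A minor additional caution: your opening heuristic that ``only the parts of $M(S)$ of exponent greater than $2$ can obstruct'' is false as stated --- for $\PSU_6(2)$ it is exactly the exponent-$2$ factor $M_e(S)$ that obstructs, since inverting it means centralizing it --- although your subsequent case analysis uses the correct formulation.
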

\begin{proof}
%If $K$ is a quasisimple group, then $\Out{K}=\Aut{K}/\Inn{K}$ 
%acts on $Z(K)$, thus for 
As is well known (see, for example, \cite{Asch}), any finite 
quasisimple group $K$ is isomorphic to a quotient 
of the universal covering group of its simple quotient $K/Z(K)$. Also, 
$\Aut{K}\simeq \Aut{K/Z(K)}$ (see for instance \cite[Section 33]{Asch}). 
Therefore, for our purposes it is enough to consider the action of 
$\Aut{S}$ on the Schur multiplier $M(S)$ when $S$ varies among all finite 
nonabelian simple groups. In this situation, the outer automorphism group 
$\Out{S}$ acts on $M(S)$, which, by \cite[Section 5, 6-1]{GL1},
is isomorphic to the direct product of two factors of 
relatively prime orders: $M_c(S)$ and $M_e(S)$. 
The actions of $\Out{S}$ on both factors are
completely described in \cite[Theorem 6.3.1 and Theorem 2.5.12]{GLS3} 
for every finite nonabelian simple group $S$. 
In particular, $\Outdiag{S}$ centralises 
$M_c(S)$, and there is an isomorphism of $\Outdiag{S}$ on $M_c(S)$ 
preserving the action of $\Out{S}$.
Note also that if one of the factors $M_c(S)$ or $M_e(S)$ has 
order at most $2$, since they have coprime orders, 
to prove our statement it is enough to see if inversion 
is induced by $\Out{S}$ on the other factor. 
We may therefore consider the two cases: 
\begin{enumerate}
\item $\abs{M_e(S)}\leq 2$, 
\item $\abs{M_e(S)}>2$.
\end{enumerate}
Case (1) $\abs{M_e(S)}\leq 2$.\\
We prove that in this case there is always an automorphism of 
$S$ inverting $M(S)$. As noted above, it is enough 
to consider the action of $\Out{S}$ on $\Outdiag{S}$, which is 
$\Aut{S}$-isomorphic to $M_c(S)$.
But $\Outdiag{S}$ is always inverted by $\Out{S}$, since, 
by \cite[Theorem 2.5.12]{GLS3}, we have that:
\begin{enumerate}
\item[i)]  if $S\in\set{A_m(q), D_{2m+1}(q), \EE_6(q)}$, then $\Outdiag{S}$
is inverted by a graph automorphism (by 2.5.12 (i)),
\item[ii)]  if $S\in\set{\null^2A_m(q), \null^2D_{2m+1}(q), \null^2\EE_6(q)}$, 
then $\Outdiag{S}$ is inverted by a field automorphism (by 2.5.12 (g)),
\item[iii)] in all other cases, $\Outdiag{S}$ is either trivial or an elementary abelian $2$-group and therefore it is inverted by the trivial automorphism. 
\end{enumerate}

\n
Case (2) $\abs{M_e(S)}>2$.\\
From \cite[Table 6.3.1]{GLS3} and the knowledge of the 
corresponding factor $M_c(S)$ (\cite[Theorem 2.5.12]{GLS3}), we 
can see that if $S$ is not isomorphic to one of the following 
simple groups
$$\PSL_3(4), \PSU_6(2), \null^2\EE_6(2), \PSU_4(3),$$ 
then there exists an automorphism of $S$ that inverts $M(S)$ 
and, therefore, any quasisimple group $K$ such that $K/Z(K)\simeq S$  
admits an automorphism inverting its center. 
We now consider separately the four special cases listed above. 

Let $S=\PSL_3(4)$. Then $M_e(S)\simeq Z_4\times Z_4$ and 
$M_c(S)\simeq Z_3$. Here $\Out{S}=\Sigma\times \seq{u}$, 
with $\Sigma=\mathrm{Outdiag}(S)\Gamma_S \simeq S_3$ and $u$ 
the image in $\Out{S}$ of a graph-field automorphism of order 2.
By \cite[Theorem 6.3.1]{GLS3} $u$ is the only element of $\Out{S}$ 
that induces inversion of $M_e(S)$. Now, $u$ is $\Aut{S}$-conjugate 
to an element of the form $\phi i$, with $\phi$ a field automorphism and 
$i$ a graph automorphism, where $\phi$ and $i$ are commuting involutions 
(note that $\Phi_S\Gamma_S\simeq Z_2\times Z_2$). The action of 
$\Phi_S\Gamma_S$ on $M_c(S)$ is the same as on $\Outdiag{S}$. Thus, by 
\cite[Theorem 2.5.12 (g) and (i)]{GLS3}, both $\phi$ and $i$ invert 
$M_c(S)$, and therefore $u$ acts trivially on it. 
This argument shows that when $K$ is the universal covering group of $S$, 
no inversion on $Z(K)$ is induced by an automorphism of $K$. Assume now 
that $K$ is a covering of $S$ different from the universal one. 
%
%If $\abs{Z(K)}\not\in\set{12,24}$, then it is easy 
%to see that there exists an automorphism of $K$ inverting $Z(K)$. 
%For, i
%
If $3\nmid\abs{Z(K)}$ then $u$ inverts $Z(K)$. Assume therefore that
$3\vert\abs{Z(K)}$. If $Z(K)$ is cyclic of order $3$ or $6$, 
then $\phi$ inverts $Z(K)$. Otherwise we may argue as follows. Since $M_c(S)$ and $\Outdiag{S}$ 
are $\Aut{S}$-isomorphic, the elements of $\Out{S}$ that induce 
inversion on $M_c(S)$ are the six non central involutions, that is, the 
elements of the set   
$$T=\Outdiag{S}\phi\cup \Outdiag{S}i.$$
Note that from Proposition 6.2.2 and the proof of 
Theorem 6.3.1 in \cite{GLS3}, $\Outdiag{S}$ 
acts faithfully on the quotient group $M_e(S)/\Phi(M_e(S))$, and hence 
on $M_e(S)$.
Next, let $t$ be an element of $T$. Since $t$ inverts $\Outdiag{S}$,
$$C_{M_e(S)/\Phi(M_e(S))}(t)\simeq C_{\Phi(M_e(S))}(t)\neq \Phi(M_e(S)),$$
so $C_{M_e(S)}(t)=\seq{b}\simeq Z_4$, and $t$ inverts a unique cyclic 
subgroup of order $4$. This in particular shows that when $K$ is a 
covering extension with $Z(K)\simeq Z_2\times Z_2\times Z_3$ or 
$Z(K)\simeq Z_2\times Z_4\times Z_3$, 
%
%then
%$K$ is isomorphic to the quotient of the universal covering $\wt{K}$ 
%modulo its unique central elementary abelian subgroup of order $4$. In this 
%case NO inversion is induced ({\bf [GAP says it, prove why!]}).\\ 
no inversion is induced by automorphisms of $K$ on $Z(K)$, while 
if $Z(K)\simeq Z_4\times Z_3$, then inversion is induced.

% ({\bf [GAP says it, prove why!]}).\\
%Let $Z(K)\simeq Z_2\times Z_4\times Z_3$, then $K$ is isomorphic 
%to a quotient of $\wt{K}$ modulo a central sugroup of order $2$.
%Up to isomorphism $K$ is unique and NO inversion is induced ({\bf [GAP says it, prove why! Probably if it were then also in the case $Z(K)\simeq Z_2\times Z_2\times Z_3$ there should be]}).

Let $S=\PSU_4(3)$. Then $M_e(S)\simeq Z_3\times Z_3$ and
$M_c(S)\simeq Z_4$. Here $\Out{S}=\Outdiag{S}\Phi_S$ is 
isomorphic to a dihedral group of order $8$ acting faithfully 
on $M_e(S)$. In particular, the nontrivial central element of $\Out{S}$ 
is the unique element inducing inversion on $M_e(S)$. Note that this element 
belongs to $\Outdiag{S}$ and therefore it centralizes $M_c(S)$. 
This implies that no inversion can be induced on $M(S)$ 
by automorphisms of $S$. Therefore the universal covering group 
$\wt{S}$ has no automorphisms inverting its center. This argument 
can be extended to show that the same situation occours in any covering 
having center of order 12. However, for all other coverings $\widehat{S}$ 
of $S$ it can be easily checked that inversion on the 
center is induced either by the nontrivial central element of 
$\Outdiag{S}\Phi_S$, when $3$ divides  $\abs{Z(\widehat{S})}$, or 
by a field automorphism of order two, when $3\nmid \abs{Z(\widehat{S})}$. 

Let $S=\PSU_6(2)$, or $S=\null^2\EE_6(2)$. In both cases we have that 
$M_e(S)\simeq Z_2\times Z_2$, $M_c(S)\simeq Z_3$, and 
$\Out{S}\simeq S_3$ acts faithfully on $M_e(S)$ 
(see \cite[Proposition 6.2.2]{GLS3}). In particular, the 
trivial outer automorphism is the only one that inverts $M_e(S)$. 
Since it does not invert $M_c(S)$, the universal covering 
groups $\wt{S}$ % (for $S\in\set{\PSU_6(2), \null^2\EE_6(2)}$) 
have no automorphisms inverting their centers. On the contrary, every  
covering $\widehat{S}$ different from the universal one
possesses such automorphisms, which are either trivial if 
$3\nmid \abs{Z(\widehat{S})}$, or are field automorphisms. 
\end{proof}
For convenience, we write $\mathcal{L}$ for the set of quasisimple groups 
that appear as exceptions in Proposition \ref{pro:aut_quasisimple}; thus 
\begin{align*}
\mathcal{L}=&\left\{ \widehat{\PSL_3(4)}\right. \!\left(\!\textrm{with }    
                   \! Z\!\left(\widehat{\PSL_3(4)}\right)\!\geq Z_2\times Z_2\times Z_3\!\right)\!,\,
         \!          \!  \widehat{\PSU_4(3)}                     
                                        \left(\!\textrm{with }           
                   \! Z\!\left(\widehat{\PSU_4(3)}\right)\!\geq  Z_3\times Z_4\!\right)\!,  \\
&\,\,\,\, \left. \wt{\PSU_6(2)},\, \, \wt{\null^2\EE_6(2)}
 \right\}.
\end{align*}
\begin{rem}{\rm{According to 
\cite[Theorem 6.3.2]{GLS3}, when $S\simeq \PSL_3(4)$ (or 
when $S\simeq \PSU_4(3)$) there are precisely two non-isomorphic 
covering groups $\widehat{S}$ such that 
$\widehat{S}/Z(\widehat{S})\simeq S$ and 
$Z(\widehat{S})\simeq Z_2\times Z_4\times Z_3$ (respectively, 
$Z(\widehat{S})\simeq Z_3\times Z_4$). In all other cases in the list 
$\mathcal{L}$ the covering group of the associated finite simple group is unique. Therefore, up to isomorphism,
$\abs{\mathcal{L}}=9$.}}
\end{rem}

%\begin{align*}
%\mathcal{L}=\{ &\widehat{\PSL_3(4)} \textrm{ (with }    
%                    Z\left(\widehat{\PSL_3(4)}\right)\gtrsim Z_2\times Z_2\times Z_3),\, \,  \widehat{\PSU_4(3)} \textrm{ (with }           
%                    Z\left(\widehat{\PSU_4(3)}\right)\gtrsim Z_3\times Z_4),\\
%                & \wt{\PSU_6(2)},\, \, \wt{\null^2\EE_6(2)}\}. 
%\end{align*}

As an application of Proposition \ref{pro:aut_quasisimple} we 
prove the following:
\begin{cor}\label{cor:aut_semisimple}
Assume that $X$ is a finite semisimple group with all
components not in $\mathcal{L}$. Then 
there exists an automorphism of $X$ that inverts the 
center $Z(X)$.
\end{cor}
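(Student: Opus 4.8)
The plan is to build the required automorphism of $X$ by assembling, one component at a time, the inverting automorphisms supplied by Proposition \ref{pro:aut_quasisimple}, and then checking that they glue together consistently. First I would write $X$ as the central product of its components, $X=X_1X_2\cdots X_t$, so that each $X_i$ is quasisimple and $[X_i,X_j]=1$ for $i\neq j$. Since by hypothesis none of the $X_i$ lies in $\mathcal{L}$, Proposition \ref{pro:aut_quasisimple} provides for each $i$ an automorphism $\alpha_i\in\Aut{X_i}$ that acts as $z\mapsto z^{-1}$ on $Z(X_i)$.

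To combine the $\alpha_i$, I would pass to the direct product $D=X_1\times\cdots\times X_t$ together with the canonical surjection $\pi\colon D\to X$ induced by multiplication in $X$. Its kernel $K$ is a central subgroup of $D$, and $X\cong D/K$ realizes the central product. The product automorphism $\alpha=\alpha_1\times\cdots\times\alpha_t\in\Aut{D}$ then has the feature that its restriction to $Z(D)=Z(X_1)\times\cdots\times Z(X_t)$ is exactly the inversion map $d\mapsto d^{-1}$, since each $\alpha_i$ inverts $Z(X_i)$.

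The crux of the argument is that $\alpha$ descends to $X$ with no further condition to impose. Indeed, $\alpha$ induces an automorphism of $D/K$ precisely when $\alpha(K)=K$; but $K\leq Z(D)$ and $\alpha$ acts on $Z(D)$ as the canonical inversion map, so it stabilizes every subgroup of $Z(D)$, whence $\alpha(K)=K$ automatically. Put another way, for the componentwise definition to be consistent the $\alpha_i$ must agree on the amalgamated central elements, and this agreement is free: on any element of $Z(X_i)\cap Z(X_j)$ both $\alpha_i$ and $\alpha_j$ act by inversion. This is precisely why a uniform choice of \emph{inverting} automorphisms (rather than arbitrary ones) is essential, and I expect this point to be the only real subtlety in the argument.

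Finally I would record that $\alpha$ descends to $\bar\alpha\in\Aut{X}$ with $\pi\alpha=\bar\alpha\pi$, and that $\bar\alpha$ inverts $Z(X)$. For the latter, note that $Z(X)=\pi(Z(D))$: the image $\pi(Z(D))=Z(X_1)\cdots Z(X_t)$ is central in $X$, and the quotient $X/\pi(Z(D))$ is the direct product of the nonabelian simple groups $X_i/Z(X_i)$, hence centerless. Since $\alpha$ inverts $Z(D)$ and $\pi$ is a homomorphism, $\bar\alpha$ inverts $\pi(Z(D))=Z(X)$, completing the proof.
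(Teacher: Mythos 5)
Your proposal is correct and follows essentially the same route as the paper: realize $X$ as a quotient of the direct product of its components by a central subgroup, take the product of componentwise inverting automorphisms from Proposition \ref{pro:aut_quasisimple}, and observe that it preserves the (central) kernel and hence descends to an automorphism of $X$ inverting $Z(X)$. Your extra verification that $Z(X)=\pi(Z(D))$ is a welcome detail the paper leaves implicit.
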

\begin{proof} 
Let $X$ be a finite semisimple group. Then $X$ is isomorphic 
to $D/N$, with $D=K_1\times K_2\times \ldots \times K_t$ 
the direct product of nonabelian quasisimple groups 
$K_i\not\in\mathcal{L}$ and 
$N\leq D$ such that $N\cap K_i=1$ for each $i\in\set{1,\ldots,t}$. Note that 
$N\leq Z(D)=Z(K_1)\times Z(K_2)\times \ldots \times Z(K_t)$.
By Proposition \ref{pro:aut_quasisimple}, for each $i\in\set{1,\ldots,t}$ 
there exists an automorphism $\alpha_i$ of $K_i$ that acts like the 
inversion on $Z(K_i)$. 
We may therefore define $\alpha\in\Aut{D}$ by 
$x^{\alpha}=(x_1,x_2,\ldots,x_t)^{\alpha}=(x_1^{\alpha_1},x_2^{\alpha_2},\ldots,x_t^{\alpha_t})$ 
for $x\in D$ (that is, where each $x_i\in K_i$).
Note that every element of $Z(D)$ is inverted by $\alpha$.
In particular $N^{\alpha}=N$, and $\alpha$ induces an 
automorphism on $D/N$ that inverts its center. 
\end{proof}

\section{The holomorph of a semisimple group}\label{sec:mainThm}
We first consider the case in which $G$ has no components belonging to $\mathcal{L}$.
\begin{pro}\label{pro:not_in_L}
Assume that $G$ is a finite nonabelian semisimple group and let 
$G=A_1A_2\ldots A_n$ be its unique central decomposition 
as a product of $\Aut{G}$-indecomposable factors. 
Suppose that the components of $G$ do not belong to $\mathcal{L}$. 
Then 
$$\mathcal{H}(G)=\set{G_J\vert J\subseteq I}.$$
Moreover, the group $T(G)=\mathrm{N}\Hol{G}/\Hol{G}$ is elementary 
abelian of order $2^n$.
\end{pro}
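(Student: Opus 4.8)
The plan is to prove the two assertions separately, in both cases reducing matters to the existence, guaranteed by Corollary \ref{cor:aut_semisimple}, of automorphisms inverting the centers of the factors. First I would show that every $G_J$ lies in $\mathcal{H}(G)$; combined with Lemma \ref{lem:J} and the chain $\mathcal{H}(G)\subseteq\mathcal{I}(G)\subseteq\mathcal{J}(G)=\set{G_J\mid J\subseteq I}$ this yields the first claim. Two facts must be checked for each $J$: that $G_J\simeq G$, and that $N_{S(G)}(G_J)=\Hol{G}$.

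For the isomorphism I would present $G$ as $D/N$, where $D=A_1\times\cdots\times A_n$, the map $\phi\colon D\to G$ is multiplication, and $N=\Ker{\phi}$; and present $G_J$ as $D/N'$, where $N'$ is the kernel of the homomorphism $D\to S(G)$ sending $(a_i)$ to $\prod_{j\in J}\rho(a_j)\prod_{k\in J^c}\lambda(a_k^{-1})$ (this is a homomorphism because the relevant images commute, by Proposition \ref{pro:basic} (1)). A short computation based on the equivalence $\rho(x)=\lambda(y)\iff x=y\in Z(G)$ shows that $N$ and $N'$ both lie in $Z(D)=\prod_i Z(A_i)$ and differ only by inversion in the coordinates indexed by $J^c$. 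Choosing for each $k\in J^c$ an automorphism $\alpha_k\in\Aut{A_k}$ inverting $Z(A_k)$ (Corollary \ref{cor:aut_semisimple} applied to the semisimple group $A_k$), the product automorphism $\beta\in\Aut{D}$ that is the identity on each $A_j$ ($j\in J$) and equals $\alpha_k$ on each $A_k$ ($k\in J^c$) restricts on $Z(D)$ to this same inversion, hence carries $N$ onto $N'$, so $G\simeq G_J$. For the normalizer I would invoke the standard fact that the normalizer in $S(G)$ of any regular subgroup $H$ is isomorphic to $\Hol{H}$, of order $\abs{H}\abs{\Aut{H}}$; taking $H=G_J\simeq G$ gives $\abs{N_{S(G)}(G_J)}=\abs{G}\abs{\Aut{G}}=\abs{\Hol{G}}$. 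Since $G_J\nor\Hol{G}$ by Lemma \ref{lem:J}, we have $\Hol{G}\leq N_{S(G)}(G_J)$, and equality of orders forces $N_{S(G)}(G_J)=\Hol{G}$. Thus $G_J\in\mathcal{H}(G)$.

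For the second assertion the strategy is to exhibit $n$ commuting involutions generating $T(G)$. Writing $B_i=\prod_{m\neq i}A_m$, for each $i\in I$ I would build a \emph{partial inversion} $w_i\in S(G)$, the map defined by $ab\mapsto a^{-1}\gamma_i(b)$ for $a\in A_i$, $b\in B_i$, where $\gamma_i\in\Aut{B_i}$ inverts $Z(B_i)$ (again supplied by Corollary \ref{cor:aut_semisimple}). The twist by $\gamma_i$ is precisely what makes $w_i$ well defined on the central product $G=A_iB_i$, the naive partial inversion failing exactly because the overlapping central elements need not be involutions. A direct calculation then gives $\rho(x)^{w_i}=\lambda(x^{-1})$ for $x\in A_i$ and $\rho(x)^{w_i}=\rho(\gamma_i(x))$ for $x\in B_i$ (and symmetrically on $\lambda$), so conjugation by $w_i$ sends each $G_J$ to $G_{J\triangle\set{i}}$. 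Because $N_{S(G)}(G_{J\triangle\set{i}})=\Hol{G}$ by the first part, $w_i$ normalizes $\Hol{G}$, that is, $w_i\in\mathrm{NHol}(G)$.

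Finally I would assemble the conclusion from the faithfulness of the regular action of $T(G)$ on $\mathcal{H}(G)$ recalled in the introduction. The images $\bar w_i\in T(G)$ induce the coordinate-flips $J\mapsto J\triangle\set{i}$ on $\mathcal{H}(G)=\set{G_J}$; as permutations these are commuting involutions, so faithfulness promotes the relations $\bar w_i^2=1$ and $[\bar w_i,\bar w_j]=1$ to $T(G)$, making $\seq{\bar w_1,\ldots,\bar w_n}$ elementary abelian. Since the coordinate-flips generate all symmetric-difference translations, this subgroup is transitive on the $2^n$ subgroups $G_J$, hence of order $2^n$; as $T(G)$ also acts regularly and so has order $2^n$, the two coincide, and $T(G)$ is elementary abelian of order $2^n$. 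The main obstacle throughout is the existence of the center-inverting automorphisms, equivalently the well-definedness of the $w_i$ and of the isomorphisms $G\simeq G_J$, which is exactly where the hypothesis that no component lies in $\mathcal{L}$ is indispensable; the order-counting and the passage to $T(G)$ are then routine.
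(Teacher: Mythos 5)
Your argument is correct in substance but takes a genuinely different route from the paper's. The paper handles everything with a single family of explicit bijections: for each $J\subseteq I$ it defines $\varphi_J\in S(G)$ by $x_Jx_{J^c}\mapsto x_J(x_{J^c})^{-\alpha_{J^c}}$ and checks by direct computation that $\varphi_J$ conjugates $\rho(G)$ to $G_J$; conjugacy then yields both $G_J\simeq G$ and (since $N_{S(G)}(G_J)\geq\Hol{G}$ already) $N_{S(G)}(G_J)=\Hol{G}$ in one stroke, and the same maps, shown by a second computation to satisfy $\varphi_J^2\in\Hol{G}$, supply the $2^n$ elements of $T(G)$. You instead decouple the two halves: the presentation of $G$ and $G_J$ as $D/N$ and $D/N'$, with $N'$ obtained from $N$ by inverting the $J^c$-coordinates of $Z(D)$, is a clean abstract way to get $G_J\simeq G$, and the order count $\abs{N_{S(G)}(G_J)}=\abs{G}\abs{\Aut{G}}$ for a regular subgroup correctly upgrades the containment from Lemma \ref{lem:J} to equality; then the $n$ partial inversions $w_i$, combined with faithfulness of the regular $T(G)$-action, give the structure of $T(G)$ without computing $w_i^2$ at all. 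Your approach buys conceptual economy in the second half (the relations $\bar w_i^2=1$ and $[\bar w_i,\bar w_j]=1$ come for free from faithfulness); the cost is running two separate constructions where the paper uses one.

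One point needs shoring up. For the identity $G_J^{w_i}=G_{J\triangle\set{i}}$ you need the relation $\rho(x)^{w_i}=\rho(\gamma_i(x))$ to carry $\rho(A_m)$ onto $\rho(A_m)$ for every $m\neq i$, i.e.\ you need $\gamma_i(A_m)=A_m$. Corollary \ref{cor:aut_semisimple} as stated only supplies \emph{some} $\gamma_i\in\Aut{B_i}$ inverting $Z(B_i)$; a priori such an automorphism could fail to stabilize the $A_m$, which are $\Aut{G}$-subgroups but need not be characteristic in $B_i$. The gap is easily closed --- the automorphism built in the proof of the corollary is a product of automorphisms of the individual components and hence stabilizes each $A_m$ --- but you should either say this explicitly or choose $\gamma_i$ from the start as a product of center-inverting automorphisms of the factors $A_m$, $m\neq i$. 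With that choice, your computation $\rho(x)^{w_i}=\lambda(x^{-1})$ for $x\in A_i$ and $\rho(x)^{w_i}=\rho(\gamma_i(x))$ for $x\in B_i$ is correct and the rest of the argument goes through.
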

\begin{proof}
By Lemma \ref{lem:J}, we have that 
$\mathcal{J}(G)=\set{G_J\vert J\subseteq I}$, and 
therefore $\mathcal{H}(G)\subseteq\set{G_J\vert J\subseteq I}$.
We fix a subset $J$ of $I$ and, using Corollary \ref{cor:aut_semisimple}, 
we choose an automorphism $\alpha_{J^c}$ of $A_{J^c}$ that inverts 
$Z(A_{J^c})$. Define the map %$\varphi_J$ by 
$\varphi_{J}:  G \to G$ by $\varphi_J(x_Jx_{J^c})=x_J(x_{J^c})^{-\alpha_{J^c}}$.
Since $\alpha_{J^c}$ inverts $A_J\cap A_{J^c}\leq Z(G)$, the map 
$\varphi_J$ is a well-defined bijection of $G$, that is, an element of $S(G)$. 
We claim that the following hold: 
\begin{enumerate}
\item[(1)] $\varphi_J$ conjugates $G_I$ to $G_J$,% i.e. $(G_I)^{\varphi_J}=G_J$,
\item[(2)] $\varphi_J\in\mathrm{N}\Hol{G}$ and, if $J\neq I$, then $\varphi_J\not\in\Hol{G}$,
\item[(3)] $(\varphi_J)^2\in\Hol{G}$.
\end{enumerate}
Note that, by the arbitrary choice of $J\subseteq I$, 
once proved, (1) will imply that $\mathcal{H}(G)=\mathcal{J}(G)=\set{G_J\vert J\subseteq I}$, while (2), (3) and the fact that $T(G)$ acts regularly on $\mathcal{H}(G)$ will imply  that $T(G)$ is an elementary abelian $2$-group 
of rank $n$. \\

(1) $(G_I)^{\varphi_J}=G_J$.\\
We claim that for $x_J\in A_J$ and $x_{J^c}\in A_{J^c}$,
$$(\rho(x_Jx_{J^c}))^{\varphi_J}=\rho(x_J)
\lambda(x_{J^c}^{-\alpha_{J^c}}),$$   
or, equivalently, that: 
$$\rho(x_Jx_{J^c})\cdot\varphi_J=\varphi_J\cdot\rho(x_J)\lambda(x_{J^c}^{-\alpha_{J^c}}).$$
Let $g\in G$ and write $g$ as $g=y_Jy_{J^c}$ (with 
$y_J\in A_J$ and $y_{J^c}\in A_{J^c}$). Then
\begin{align*}
g^{\rho(x_Jx_{J^c})\cdot\varphi_J} &= (y_Jy_{J^c}x_Jx_{J^c})^{\varphi_J}=(y_Jx_Jy_{J^c}x_{J^c})^{\varphi_J}\\
    &= y_Jx_J(y_{J^c}x_{J^c})^{-\alpha_{J^c}}=y_Jx_Jx_{J^c}^{-\alpha_{J^c}}y_{J^c}^{-\alpha_{J^c}},
\end{align*} 
and 
\begin{align*}
g^{\varphi_J\cdot\rho(x_J)\lambda(x_{J^c}^{-\alpha_{J^c}})} 
&= (y_Jy_{J^c}^{-\alpha_{J^c}})^{\rho(x_J)
\lambda(x_{J^c}^{-\alpha_{J^c}})}= 
x_{J^c}^{-\alpha_{J^c}}y_Jy_{J^c}^{-\alpha_{J^c}}x_J\\
&=y_Jx_Jx_{J^c}^{-\alpha_{J^c}}y_{J^c}^{-\alpha_{J^c}}.
\end{align*} 
Therefore (1) is proved. Note that, together with Lemma \ref{lem:J}, 
we have proved that $\mathcal{J}(G)\subseteq \mathcal{H}(G)$ 
and therefore $\mathcal{H}(G)=\mathcal{I}(G)=\mathcal{J}(G)$.\\
 
(2) $\varphi_J\in\mathrm{N}\Hol{G}$.\\
By (1) we have that  
$$(N_{S(G)}(G_I))^{\varphi_J}=N_{S(G)}(G_I^{\varphi_J})=N_{S(G)}(G_J)
=N_{S(G)}(G_I),$$
since each $G_J$ lies in $\mathcal{H}(G)=\mathcal{I}(G)$, and 
therefore $N_{S(G)}(G_J)=N_{S(G)}(G_I)$; thus $\varphi_J\in\textrm{N}\Hol{G}$. \\
Furthermore, $G_J\neq G_I$ for $J\neq I$, thus we trivially have that 
$\varphi_J\not\in\Hol{G}$ for $J\neq I$.\\

(3) $(\varphi_J)^2\in\Hol{G}$.\\
We claim that for every $x_Jx_{J^c}\in G$:
$$(\rho(x_Jx_{J^c}))^{\varphi_J^2}=\rho(x_Jx_{J^c}^{\alpha_{J^c}^2}),$$
or, equivalently, that: 
$$\rho(x_Jx_{J^c})\cdot\varphi_J^2=\varphi_J^2\cdot
\rho(x_Jx_{J^c}^{\alpha_{J^c}^2}).$$
Let $g\in G$ and write $g=y_Jy_{J^c}$, 
where $y_J\in A_J$, $y_{J^c}\in A_{J^c}$. Then
\begin{align*}
g^{\rho(x_Jx_{J^c})\cdot\varphi_J^2} &= (y_Jy_{J^c}x_Jx_{J^c})^{\varphi_J^2}=(y_Jx_Jy_{J^c}x_{J^c})^{\varphi_J^2}\\
    &= y_Jx_J(y_{J^c}x_{J^c})^{\alpha_{J^c}^2}=
       y_Jy_{J^c}^{\alpha_{J^c}^2}x_Jx_{J^c}^{\alpha_{J^c}^2},
\end{align*} 
while 
\begin{align*}
g^{\varphi_J^2\cdot\rho(x_Jx_{J^c}^{\alpha_{J^c}^2})} &= (y_Jy_{J^c}^{\alpha_{J^c}^2})^{\rho(x_Jx_{J^c}^{\alpha_{J^c}^2})}= 
y_Jy_{J^c}^{\alpha_{J^c}^2}x_Jx_{J^c}^{\alpha_{J^c}^2}.
\end{align*} 
This completes the proof of Proposition \ref{pro:not_in_L}.
\end{proof}
\begin{rem}\label{rem:circ}
{\rm{For each fixed subset $J$ of $I$ we may define an 
operation $\circ_J$ on the set of elements of $G$, as follows:
$$g\circ_J h =g_Jg_{J^c}\circ_J h_Jh_{J^c} = g_Jh_Jh_{J^c}g_{J^c}$$
for each $g=g_Jg_{J^c}, h=h_Jh_{J^c}\in G$, where $g_J,h_J\in A_J$ and 
$g_{J^c}, h_{J^c}\in A_{J^c}$. Then $(G,\circ_J)$ is a group. 
Note that $\circ_I$ coincides with the group operation of $G$, 
while $\circ_{\varnothing}$ with the opposite multiplication in $G$, 
that is, $g_1\circ_{\varnothing} g_2=g_2g_1$, for every $g_1,g_2\in G$. 
With this notation, it is straightforward to prove that 
\begin{enumerate}
\item $(G,\circ_J)$ is a group isomorphic to $G_J$, 
for each $J\subseteq I$;
\item $\Aut{G}=\Aut{G,\circ_J}$ for each $J\subseteq I$ (see \cite[Theorem 5.2 (d)]{CDV};
\item if $G$ satisfies the assumptions of Proposition 
\ref{pro:not_in_L}, each map $\varphi_{J}$ is an isomorphism between 
$(G, \circ_I)$ and $(G, \circ_J)$. 
\end{enumerate}
}}
\end{rem}

We consider now the general situation in which exactly $l$ components of 
$G$ do belong to $\mathcal{L}$. If $K\in\mathcal{L}$ we call 
\emph{$\mathcal{L}$-critical} any subgroup $U$ of $Z(K)$ such that
respectively:
\begin{itemize}
\item $U\simeq Z_2\times Z_2\times Z_3$, if $K\simeq \widehat{\PSL_3(4)}$,
\item $U\simeq Z_3\times Z_4$, if $K\simeq \widehat{\PSU_4(3)}$ and \item $U=Z(K)$, if $K\simeq \wt{\PSU_6(2)}$ or if $K=\wt{\null^2\EE_6(2)}$.
\end{itemize}

\begin{thm}\label{thm:A}
Let $G$ be a finite nonabelian semisimple group and let 
$$G=A_1A_2\ldots A_n$$ be its unique central decomposition 
as a product of $\Aut{G}$-indecomposable factors. 
Assume that the number of factors $A_i$ of $G$ having components 
in $\mathcal{L}$ is exactly $l$, for some $0\leq l\leq n$. 
Then the set $\mathcal{H}(G)$ is completely known and it has cardinality
$\abs{\mathcal{H}(G)}= 2^h,$ 
where $m\leq h\leq n$ and $m=\min\set{n-l+1,n}$.\\ 
Also, the group $T(G)$ is an elementary abelian group of order $2^h$.\\
Moreover, if the centers of the factors $A_i$ are all amalgamated, 
then $\abs{\mathcal{H}(G)}= 2^m$ and $T(G)$ is elementary abelian of 
order $2^km$.
\end{thm}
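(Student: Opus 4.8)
The plan is to reduce the determination of $\mathcal H(G)$ to a linear‑algebra problem over $\FF_2$, and then to isolate the obstruction coming from the factors in $\mathcal L$ via Proposition \ref{pro:aut_quasisimple}. First I would pin down $\mathcal H(G)$ inside $\mathcal J(G)=\set{G_J\mid J\subseteq I}$ (Lemma \ref{lem:J}) by showing
\[
\mathcal H(G)=\set{G_J\mid G_J\simeq G}=\set{G_J\mid (G,\circ_J)\simeq(G,\circ_I)}.
\]
The inclusion $\subseteq$ is immediate. For $\supseteq$, read any abstract isomorphism $\psi\colon(G,\circ_I)\to(G,\circ_J)$ as a permutation of the underlying set $G$: by Remark \ref{rem:circ}(2) one has $\Aut{G}=\Aut{G,\circ_J}$ as subgroups of $S(G)$, so conjugation by $\psi$ normalises $\Aut{G}$ and, by Remark \ref{rem:circ}(1), carries $G_I=\rho(G)$ to $G_J$. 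Since $G_J\leq\Hol{G}$ (Lemma \ref{lem:G_J}) and $\Aut{G}\cap G_J\leq\Stab_{S(G)}(1)\cap G_J=1$, an order count gives $N_{S(G)}(G_J)=\Aut{G}\,G_J=\Hol{G}$; hence $\psi\in\mathrm{N}\Hol{G}$ and $G_J\in\mathcal H(G)$. In particular $\mathcal H(G)=\mathcal I(G)$ is exactly the $\mathrm{N}\Hol{G}$‑orbit of $\rho(G)$.

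Next I would encode flips linearly. To each $J$ attach $\sigma_J=\mathbf 1_{J^c}\in\FF_2^{\,n}$ and set $\Sigma=\set{\sigma_J\mid G_J\simeq G}$. Generalising computations (1)–(3) of Proposition \ref{pro:not_in_L}, the assignment $(\text{class of }\varphi_J)\mapsto\sigma_J$ turns composition in $T(G)=\mathrm{N}\Hol{G}/\Hol{G}$ into symmetric difference, while $\varphi_J^{\,2}\in\Hol{G}$ forces every nontrivial class to be an involution; as $T(G)$ acts regularly on $\mathcal H(G)$ this yields a group isomorphism $T(G)\simeq\Sigma$. Thus $\Sigma$ is an $\FF_2$‑subspace of $\FF_2^{\,n}$, $T(G)$ is elementary abelian, and $\abs{\mathcal H(G)}=\abs{T(G)}=2^{h}$ with $h=\dim_{\FF_2}\Sigma$. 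To compute $h$ I record the working criterion: writing $G=\hat G/N$ with $\hat G=A_1\times\cdots\times A_n$ and $N\leq Z(\hat G)$ the amalgamating subgroup, one checks $G_J\simeq\hat G/N_J$, where $N_J$ is obtained from $N$ by inverting the central coordinates indexed by $J^c$; since an isomorphism of central quotients of $\hat G$ lifts to $\Aut{\hat G}$, one gets $\sigma_J\in\Sigma$ iff $N$ and $N_J$ lie in the same $\Aut{\hat G}$‑orbit.

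For the bounds I would argue as follows. Each good index $i$ (with $A_i$ having no component in $\mathcal L$) contributes $e_i\in\Sigma$: by Corollary \ref{cor:aut_semisimple} there is $\alpha_i\in\Aut{A_i}$ inverting $Z(A_i)$, and the associated $\varphi$ (inverting only the $A_i$‑part) is well defined precisely because $\alpha_i$ inverts the overlap $A_{I\setminus\set{i}}\cap A_i\leq Z(A_i)$. Moreover $\mathbf 1\in\Sigma$, since $\mathrm{inv}\in\mathrm{N}\Hol{G}$ (Proposition \ref{pro:basic}(3)) carries $G_I$ to $G_{\varnothing}=\lambda(G)\simeq G$ (Lemma \ref{lem:inv}). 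The $n-l$ vectors $e_i$ (good $i$) are independent and, when $l\geq1$, $\mathbf 1$ lies outside their span; hence $h\geq(n-l)+1=m$, while $h\leq n$ is trivial from $\Sigma\leq\FF_2^{\,n}$. (For $l=0$ this already forces $h=n$, recovering Proposition \ref{pro:not_in_L}.)

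Finally, the amalgamated case, which I read as: $N$ is a full diagonal, so the $Z(A_i)$ are all identified with a single central group $Z$ and $N=\set{(z,\ldots,z)\mid z\in Z}$ under these identifications. Two features drive the count. First, inverting every coordinate fixes $N$ inside the abelian group $Z(\hat G)$, so $N_\sigma=N_{\sigma+\mathbf 1}$ for all $\sigma$ (this reflects $\mathbf 1\in\Sigma$). Second, a diagonal $\Psi=(\psi_1,\ldots,\psi_n)\in\Aut{\hat G}$ sends $N$ to $N_\sigma$ iff $\psi_i|_Z$ is inversion when $\sigma_i=1$ and the identity when $\sigma_i=0$; for a good index this is available by Corollary \ref{cor:aut_semisimple}, whereas for a bad index $b$ inversion of $Z=Z(A_b)$ is impossible by Proposition \ref{pro:aut_quasisimple}, since the amalgamated $Z$ then contains an $\mathcal L$‑critical subgroup of $A_b$ (and permuting isomorphic bad factors fixes the diagonal, affording no inversion either). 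Hence, up to the symmetry $\sigma\mapsto\sigma+\mathbf 1$, the bad coordinates of any $\sigma\in\Sigma$ must all vanish, i.e.\ must all be equal. Therefore $\Sigma=\seq{e_i\ (i\text{ good}),\ \mathbf 1}$ has dimension $m$, giving $\abs{\mathcal H(G)}=2^{m}$ and $T(G)$ elementary abelian of order $2^{m}$ (the ``$2^{k}m$'' in the statement being a misprint for $2^{m}$). The heart of the argument—and the main obstacle—is exactly this last impossibility of inverting the amalgamated centre on a bad factor, which is where the classification defining $\mathcal L$ (Proposition \ref{pro:aut_quasisimple}) is indispensable.
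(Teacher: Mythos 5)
Your overall strategy coincides with the paper's: both identify $\mathcal{H}(G)$ inside $\mathcal{J}(G)=\set{G_J\mid J\subseteq I}$ as those $G_J$ with $(G,\circ_J)\simeq (G,\circ_I)$, both obtain the lower bound $2^m$ from involutions of the type $\varphi_J$ built from Corollary \ref{cor:aut_semisimple} together with $\mathrm{inv}$, and both deduce that $T(G)$ is elementary abelian (hence $\abs{\mathcal{H}(G)}=2^h$) from the fact that every class of $T(G)$ is represented by such an involution. Your $\FF_2$-linear packaging ($\Sigma\leq\FF_2^{\,n}$ spanned by the $e_i$ for good $i$ together with $\mathbf 1$) is a reformulation of the paper's set $\mathcal{K}=\set{G_J,G_{J^c}\mid J\cap L=\varnothing}$, and you correctly read ``$2^km$'' as a misprint for $2^m$.

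Where you genuinely diverge is the amalgamated case, and there your argument has a soft spot. The paper restricts an assumed isomorphism $\alpha\colon G\to(G,\circ_R)$ to $W=A_r\cap A_s$ for $r\in R\cap L$, $s\in R^c\cap L$: since $\alpha$ is a homomorphism on $A_r$ and an antihomomorphism on $A_s$, inversion on $W$ is forced to come from automorphisms of the factors, which clashes with Proposition \ref{pro:aut_quasisimple} because $W=Z(G)$ contains an $\mathcal{L}$-critical subgroup. You instead pass to $\hat G=A_1\times\cdots\times A_n$ and compare $\Aut{\hat G}$-orbits of $N$ and $N_\sigma$. Two steps there are overstated. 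First, ``an isomorphism of central quotients of $\hat G$ lifts to $\Aut{\hat G}$'' is not automatic; you need the observation (Remark \ref{rem:circ}) that such an isomorphism preserves each $A_i$ and is a homomorphism or antihomomorphism on it. Second, your ``iff'' for a diagonal $\Psi$ to send $N$ to $N_\sigma$ is too rigid: the correct condition is that $\psi_i|_Z=\beta\cdot\mathrm{inv}^{\sigma_i}$ for one common $\beta\in\Aut{Z}$, so what must be excluded is not merely ``some single bad factor admits an automorphism inverting $Z$'' but ``$\mathrm{inv}_Z$ factors as a ratio of central actions induced by two possibly different bad factors.'' Proposition \ref{pro:aut_quasisimple} addresses one factor at a time, so as written your reduction does not close this; the way to close it is precisely the homomorphism/antihomomorphism dichotomy on $A_r\cap A_s$ that the paper uses, which pins the inversion down against a single component whose $\mathcal{L}$-critical subgroup lies in the amalgamated centre. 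You should fold that step back into your orbit computation.
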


\begin{proof}
By Proposition \ref{pro:not_in_L} the result is clear for $l=0$, so assume 
$l>0$.\\
Without loss of generality we may assume that $A_1,A_2,\ldots, A_l$ 
are the central $\Aut{G}$-indecomposable factors having components 
in $\mathcal{L}$. We set $L=\set{1,2,\ldots,l}$ and we claim that 
$\mathcal{H}(G)$ contains the set 
$$\mathcal{K}=\set{G_J, G_{J^c}\vert J\cap L=\varnothing},$$ 
whose cardinality is $2\cdot\abs{\mathcal{P}(L^c)}=2^m$.
By the definition of $\mathcal{H}(G)$ and  Lemmas \ref{lem:inv} and 
\ref{lem:G_J}, it is enough to show that $G_{J^c}\simeq G_I$ for 
each subset $J$ of $L^c$. By Corollary \ref{cor:aut_semisimple}
there exists an automorphism $\alpha_J$ of $A_J$ that inverts $Z(A_J)$.
Therefore the map $\varphi_{J^c}$ defined as in Proposition \ref{pro:not_in_L} by 
$\varphi_{J^c}(x_Jx_{J^c})=(x_J)^{-\alpha_{J}}x_{J^c}$
for each $x_J\in A_J$, $x_{J^c}\in A_{J^c}$, 
is a well-defined bijection of $G$ that conjugates $G_I$ to $G_{J^c}$.
Note that $T(G)$ contains the elementary abelian $2$-subgroup
$\set{\varphi_{J^c}\vert J\cap L=\varnothing}$.

Assume now that $\mathcal{K}\subset \mathcal{H}(G)$.\\ 
Note that $G_R\in \mathcal{H}(G)\setminus \mathcal{K}$ for some 
$R\subset I$ if and only if 
$G_{R^c}\in \mathcal{H}(G)\setminus \mathcal{K}$. This shows in 
particular that $\abs{\mathcal{H}(G)}$ is even.\\
Moreover,  by Remark \ref{rem:circ}, 
$G_R\in\mathcal{H}(G)\setminus\mathcal{K}$
if and only if $G_R$ is isomorphic to the group $(G,\circ_R)$.
Now, by Remark \ref{rem:circ}, any possible isomorphism $\alpha$ 
from $G$ to $(G, \circ_R)$ maps each $A_i$ to itself. 
In particular, if we take $r\in R\cap L$ and $s\in 
R^c\cap L$, 
the isomorphism $\alpha$ induces an automorphism on $A_r$ and 
an antihomomorphism on $A_s$, as  we have:
\begin{align}
(a_r b_r)^{\alpha} &= a_r^{\alpha} b_r^{\alpha}=
                   a_r^{\alpha}\circ_J b_r^{\alpha}  \quad \textrm{ for each } a_r,b_r\in A_r, \\
(a_sb_s)^{\alpha} &= a_s^{\alpha} b_s^{\alpha}= b_s^{\alpha} \circ_J a_s^{\alpha} \quad \textrm{ for each } a_s,b_s\in A_s.
\end{align}
Condition (3) implies that the restriction of $\alpha$ to 
$W=A_r\cap A_s$ is a homomorphism, while condition (4) 
implies that the restriction of $\alpha\circ \textrm{inv}$ to 
$W$ is a homomorphism. We deduce that the inversion map on $W$ is 
induced by an automorphism of $A_r$, which is in contradiction with 
Proposition \ref{pro:aut_quasisimple} if $W$ contains an 
$\mathcal{L}$-critical subgroup for some component of $A_r$. 
Thus we have proved that $G_R\in\mathcal{H}(G)\setminus\mathcal{K}$
if and only if for every $r\in R\cap L$ and every $s\in R^c\cap L$
the subgroup $A_r\cap A_s$ does not contain $\mathcal{L}$-critical 
subgroups of components. Note that this is equivalent to saying that 
the involutory map $\varphi_R$ is an element of $T(G)$. In particular, 
$T(G)$ is an elementary abelian $2$-group of order 
$\abs{\mathcal{H}(G)}$, which is therefore a power of $2$.\\
When $Z(A_i)=Z(G)$ for each $i=1,2,\ldots,n$, the result is clear since each $A_r\cap A_s=Z(G)$.
\end{proof}

\end{document}